\newtheorem{definition}{Definition}[section]
\newtheorem{corollary}[definition]{Corollary}
\newtheorem{proposition}[definition]{Proposition}
\newcommand{\newreptheorem}[2]{\newtheorem*{rep@#1}{\rep@title}\newenvironment{rep#1}[1]{\def\rep@title{#2 \ref*{##1}}\begin{rep@#1}}{\end{rep@#1}}}
\theoremstyle{definition}
\newtheorem{remark}[definition]{Remark}
\newcommand{\gal}{\mathrm{Gal}}
\newcommand{\im}{\mathrm{im}}
\newcommand{\cl}{\mathrm{Cl}}
\newcommand{\Ep}{E^{(p)}}
\newcommand{\leg}[2]{\genfrac{(}{)}{}{}{#1}{#2}}
    \DeclareFontFamily{U}{wncy}{}
    \DeclareFontShape{U}{wncy}{m}{n}{<->wncyr10}{}
    \DeclareSymbolFont{mcy}{U}{wncy}{m}{n}
    \DeclareMathSymbol{\Sh}{\mathord}{mcy}{"58}
\title{A remark on prime (non)congruent numbers}
\author[T.~Evink]{Tim Evink}
\address{Institute of Algebra and Number Theory, Ulm University, 
Helmholtzstr.~18, 89081 Ulm, Germany.}
\email{tim.evink@uni-ulm.de}
\author[J.~Top]{Jaap Top}
\address{Bernoulli Institute, University of Groningen,
Nijenborgh~9, 9747~AG~Groningen, the Netherlands.}
\email{j.top@rug.nl}
\author[J.D.~Top]{Jakob Dirk Top}
\address{Bernoulli Institute, University of Groningen,
Nijenborgh~9, 9747~AG~Groningen, the Netherlands.}
\email{j.d.top@rug.nl}
\begin{document}

\maketitle

\begin{abstract}
     This paper discusses prime numbers that are (resp. are not)
     congruent numbers. Particularly the only case not fully covered by
     earlier results, namely primes of the form
     $p=8k+1$, receives attention.
 \end{abstract}

\section{Introduction}

A congruent number is, by definition, a positive integer $n$
that occurs as the area of a rectangular triangle with rational
side lengths. In other words, nonzero $a,b,c\in\mathbb{Q}$ should
exist such that $a^2+b^2=c^2$ and $ab/2=n$.
A classical, equivalent definition is that $n>0$ is congruent
precisely when an arithmetic progression $(x-n,x,x+n)$ consisting
of three rational squares exists. 
There is an abundance of literature on congruent numbers,
particularly because of their connection to the arithmetic
of elliptic curves and to modular forms. For example, the textbook
\cite{Koblitz} describes many of the spectacular results on
congruent numbers. Part of it is also discussed in \cite{TopYui},
and various centuries old results including some that already
appeared in Leonardo Pisano's {\sl Liber Quadratorum}
published in 1225, can be found in \cite[Chapter~XVI]{Dickson}. Many investigations regarding
(non)congruent numbers start with the standard and well-known
observation $(1)\Leftrightarrow(2)\Leftrightarrow(3)$, with
\begin{enumerate}
\item[(1)] $n$ is congruent;
\item[(2)] $E^{(n)}\colon y^2=x^3-n^2x$ contains a point $(a,b)\in E^{(n)}(\mathbb{Q})$ with $b\neq 0$;
\item[(3)] $E^{(n)}\colon y^2=x^3-n^2x$ contains $P\in E^{(n)}(\mathbb{Q})$ of infinite order.
\end{enumerate}

In the present note we collect some information regarding
prime numbers that are congruent numbers or noncongruent numbers. Concretely:
\begin{itemize}
    \item We review known results, including
    detailed references to in some cases very
    classical sources (the remainder of the present section).
    \item We recall the equivalence of
    various conditions that imply a prime $p\equiv 1\bmod 8$ to be noncongruent
    and reformulate this in a way allowing one
    to obtain a density result on the number of such primes (Proposition~\ref{equiv1mod8} and
    Corollary~\ref{Densityresult}).
    \item By comparing a $2$-descent over $\mathbb{Q}$ with a $2$-descent over
    $\mathbb{Q}(\sqrt{p})$ we obtain a new
    and relatively general approach towards
    obtaining classical results on a class
    of primes $p\equiv 1\bmod 8$ not being congruent (Propositions~\ref{SelQ} and \ref{SelK} as well as Corollary~\ref{Densityresult}).
    \item The only primes for which no unconditional general statement regarding congruence is known to date,
    are the $p\equiv 1\bmod 8$ ones
    not in the class referred to above. We present some data
    useful for predicting density
    statements in this case (Section~\ref{Sect3}).
    \item We show how a conjecture of Bouniakowsky
    implies the existence of infinitely
    many primes $p\equiv 1\bmod 8$ that are congruent numbers. Moreover we estimate
    the number of primes below any bound $b$ that our construction is predicted to give (Proposition~\ref{examples1mod8} and Remark~\ref{DensityRemark}).
\end{itemize}
 Recognizing whether a prime is congruent or not, is also the
topic of the 2006 master's thesis \cite{Hemenway}. The detailed
explanation given there allows us to remain brief on some issues here.

\vspace{\baselineskip}
Historically,  $p=2$ is the subject of
\cite[Prop.~XII, pp.~114--116]{Barlow}: Peter Barlow already in 1811
showed that $2$ is not congruent. As for the odd
primes $p$, probably the earliest result not restricted to
a single prime is due to A.~Genocchi (1855, 
\cite[pp.~314--315]{Genocchi}, see also T.~Nagell's 1929 expository text \cite[pp.~16--17]{Nagell}): no prime $p\equiv 3\bmod 8$ is congruent.
The same result with a totally different and much less elementary proof is presented
in \cite[Prop.~5]{Tunnell}.
In contrast, {\em every} prime $p\equiv 5,7\bmod 8$ turns out
to be congruent. This is observed in a short paper by
N.M.~Stephens \cite[bottom of p.~183]{Stephens}; a detailed
proof can be found in \cite{Monsky}.

The situation for $p\equiv 1\bmod 8$ is less complete.
L.~Bastien \cite{Bastien} in 1915 announced that, writing
$p=a^2+b^2$ as a sum of two squares, if the Legendre symbol
$\leg{a+b}{p}$ equals $-1$ then $p$ is noncongruent.
The same result can be deduced from a theorem by Michael J. Razar (1974): although his condition on $p$ looks different at first sight and
the notion ``congruent number'' does not occur in the paper,
from \cite[Thm~2]{Razar} and the remarks on ``first descent''
contained in the text it is immediate that if one of the 
following conditions holds:
\begin{itemize}
\item[(a)] $p\equiv 1\bmod 16$ and $2$ is not a $4$th power modulo $p$; 
\item[(b)] $p\equiv 9\bmod 16$ and $2$ is a $4$th power modulo $p$,
\end{itemize}
then $p$ is not a congruent number.
Yet another way to formulate this result, is presented in
J.B.~Tunnell's 1983 paper \cite[Prop.~6]{Tunnell}: if the prime
$p\equiv 1\bmod 8$ is written as $p=a^2+4b^2$ for integers $a,b$,
then $16\nmid p-1+4b\Rightarrow p\;\text{is not congruent}$.
Tunnell's proof is very different from Razar's. Again an alternative way
to formulate and prove the same result one 
finds in the 2006 master's
thesis of Brett Hemenway \cite[p.~41]{Hemenway}. He shows, 
writing $p\equiv 1\bmod 8$ as $p=a^2+b^2$ for integers $a,b$, that
if $(a+b)^2\equiv 9\bmod 16$ then $p$ is not congruent. His
proof (in fact similar to Razar's but more detailed) consists of
a $2$-descent on the elliptic curve given by $y^2=x^3+4p^2x$;
this curve is $2$-isogenous to the one with equation $y^2=x^3-p^2x$.
Although involving different curves, this is very much in the
spirit of \cite{Stroeker-Top}.
The last way we mention of formulating
an equivalent result is a
special case of
\cite[Thm.~1.1(1)]{Ouyang-Zhang}: if
for a prime $p\equiv 1\bmod 8$ the
Legendre symbol $\leg{1+i}{p}=-1$ (here
$i\in\mathbb{F}_p$ is a square root of $-1$), then $p$ is noncongruent.

In the next section we discuss the equivalence of the criteria above.
In particular this allows one to find the density of the involved set of primes. We also sketch how, apart from the fact that all primes 
$\equiv 5,7\bmod p$ are congruent, the results mentioned here
can be verified by comparing $2$-descents over $\mathbb{Q}$ and
over $\mathbb{Q}(\sqrt{p})$; this unified approach,
inspired by \cite{EvdHT} and in particular by Propositions~5.5-5.6
in {\sl loc. sit.}, is somewhat
different from what one finds in earlier literature.
The last section contains results and some data regarding primes $p\equiv 1\bmod 8$ {\em not} satisfying the equivalent conditions described above.
\section{Equivalences and Selmer groups}
Consider a prime number $p\equiv 1\bmod 8$. The congruence condition
implies that $-1$ is a fourth power modulo $p$ and that $2$ is
a square modulo $p$. Moreover, one can write $p=a^2+b^2$ for
integers $a,b$ of which one is odd and the other is divisible by $4$.
The equivalences mentioned in the introduction are included in
the next result. It uses a certain Galois extension $N/\mathbb{Q}$,
namely the splitting field over $\mathbb{Q}$ of the minimal
polynomial $x^4-2x^2+2$ of $\alpha:=\sqrt{1+i}$; this minimal
polynomial has zeroes $\pm\alpha$ and $\pm\beta$ with $(\alpha\beta)^2=2$. Hence $N=\mathbb{Q}(\sqrt{2},\alpha)$ and
the Galois group $\gal(N/\mathbb{Q})$ is the dihedral group of
order $8$, with generators $\rho$ (of order $4$) defined by
$\alpha\mapsto \beta\mapsto -\alpha$ and $\sigma$ (of order $2$)
defined by $\alpha\mapsto\beta\mapsto\alpha$. Note
$\gal(N/\mathbb{Q}(\sqrt{-2}))=\langle\rho\rangle$ (cyclic group of order $4$; $(\alpha^3-\alpha)\beta$ is a square root of $-2$ and is fixed by $\rho$). The element $\rho^2$ is the unique nontrivial
element in the center of
$\gal(N/\mathbb{Q})$; the field of invariants under $\rho^2$
equals $\mathbb{Q}(i,\sqrt{2})$ which is the $8$-th cyclotomic field.
In particular, $\rho^2$ being in the center implies that a
Frobenius element as appearing in part (7) of the next result,
is well-defined (and not only defined up to conjugacy).
\begin{proposition}
\label{equiv1mod8}
For a prime $p\equiv 1\bmod 8$ the following statements are equivalent.
\begin{enumerate}
    \item[{\rm(1.)}] For $j\in\mathbb{Z}$ with $j^2\equiv -1\bmod p$ one has
    $\leg{1+j}{p}=-1$;
    \item[{\rm(2.)}] For $r\in\mathbb{Z}$ with $r^2\equiv 2\bmod p$ one has
    $\leg{1+r}{p}=-1$;
    \item[{\rm(3.)}] For $r\in\mathbb{Z}$ with $r^2\equiv 2\bmod p$ one has
    $(-1)^{(p-1)/8}\cdot\leg{r}{p}=-1$;
    \item[{\rm(4.)}] Writing $p=a^2+b^2$ with $a,b\in\mathbb{Z}$, one has
    $\leg{a+b}{p}=-1$;
    \item[{\rm(5.)}] Writing $p=a^2+b^2$ with $a,b\in\mathbb{Z}$, one has
    $(a+b)^2\equiv 9\bmod 16$;
    \item[{\rm(6.)}] Writing $p=a^2+4c^2$ with $a,c\in\mathbb{Z}$, one has
    $16\nmid p-1+4c$;
    \item[{\rm(7.)}] The (unique) Frobenius element in $G:=\gal(\mathbb{Q}(\sqrt{2},\sqrt{1+i})/\mathbb{Q})$
    corresponding to $p$ equals the generator of the
    center of $G$.
    \item[{\rm(8.)}] With $M$ the normal closure of $\mathbb{Q}(\sqrt{1+\sqrt{2}})$ over $\mathbb{Q}$ which is a dihedral extension of
    degree $8$, the (unique) Frobenius element in $\gal(M/\mathbb{Q})$
    corresponding to $p$ equals the generator of the
    center.
\end{enumerate}
\end{proposition}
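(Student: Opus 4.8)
The plan is to designate statement (1.) as a hub and connect each of the remaining conditions to it, exploiting that $p\equiv 1\bmod 8$ forces a primitive eighth root of unity $\zeta_8\in\mathbb{F}_p$. I would fix such a $\zeta_8$ and set $i=\zeta_8^2$ and $r=\zeta_8+\zeta_8^{-1}$, so that $i^2\equiv-1$ and $r^2\equiv 2$ in $\mathbb{F}_p$, the point being the elementary identity $1+i=\zeta_8\,r$ in $\mathbb{F}_p$. Before anything else I would record that the Legendre symbols occurring in (1.), (2.), (4.) do not depend on the sign or labelling choices, using $(1+j)(1-j)\equiv 2$, $(1+r)(1-r)\equiv -1$ and $\leg{2}{p}=\leg{-1}{p}=1$, so that each of these statements is unambiguous.

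For the two Frobenius conditions I would argue uniformly. Since $p\equiv 1\bmod 8$, the prime $p$ splits completely in $\mathbb{Q}(\zeta_8)=\mathbb{Q}(i,\sqrt2)$, which by the discussion preceding the proposition is the fixed field of the center of $\gal(N/\mathbb{Q})$; hence the (well-defined) Frobenius lies in the center $\{1,\rho^2\}$ and equals $1$ exactly when $p$ splits completely in the quadratic extension $N/\mathbb{Q}(\zeta_8)$. By Hensel's lemma this happens precisely when $1+i$ is a square modulo $p$, so the Frobenius equals the central generator iff $1+i$ is a nonsquare, giving (7.)$\Leftrightarrow$(1.). The remaining link to (2.) and (8.) I would extract from the field identity
\[
(1+i)(1+\sqrt2)=\Bigl(1+\tfrac{\sqrt2}{2}\,(1+i)\Bigr)^{2}\in\mathbb{Q}(\zeta_8),
\]
which shows $N=M$, so that (8.) refers literally to the same field, Frobenius and center as (7.); reducing the same identity modulo a prime above $p$ gives $\leg{1+i}{p}\leg{1+\sqrt2}{p}=1$, i.e.\ (1.)$\Leftrightarrow$(2.).

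For the purely arithmetic reformulations I would proceed as follows. From $1+i=\zeta_8 r$ and $\leg{\zeta_8}{p}=\zeta_8^{(p-1)/2}=(-1)^{(p-1)/8}$ one reads off $\leg{1+j}{p}=(-1)^{(p-1)/8}\leg{r}{p}$, which is exactly (1.)$\Leftrightarrow$(3.). Writing $p=a^2+b^2$ with $a$ odd and $b=4m$ (so the $c$ of (6.) equals $2m$), quadratic reciprocity gives $\leg{a}{p}=\leg{p}{a}=\leg{b^2}{a}=1$, hence also $\leg{b}{p}=1$, and since $a+b\equiv b(1+i)\bmod p$ this yields (4.)$\Leftrightarrow$(1.). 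The congruence $(a+b)^2=p+2ab\equiv p+8m\bmod 16$ together with $p-1+4c=8\bigl((p-1)/8+m\bigr)$ reduces both (5.) and (6.) to the parity of $(p-1)/8+m$, so (5.)$\Leftrightarrow$(6.). Finally I would invoke Gauss's determination of the biquadratic character of $2$: since $\leg{r}{p}=2^{(p-1)/4}$ is $+1$ iff $2$ is a fourth power modulo $p$ iff $8\mid b$, one gets $\leg{r}{p}=(-1)^{m}$, whence (3.) reads $(-1)^{(p-1)/8+m}=-1$, that is, (6.).

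The conceptual heart, and the step I expect to be the main obstacle, is the bundle (7.)$\Leftrightarrow$(8.)$\Leftrightarrow$(1.)$\Leftrightarrow$(2.): exhibiting the explicit square root that proves $N=M$, and checking that the center's fixed field really is $\mathbb{Q}(\zeta_8)$, is the delicate bookkeeping, though once in hand it collapses four of the eight conditions simultaneously. The genuine external input is Gauss's theorem on the quartic residue character of $2$; it is what connects the ``representation modulo $16$'' conditions (5.), (6.) to the Legendre-symbol conditions, and without it the link (3.)$\Leftrightarrow$(6.) is the real bottleneck.
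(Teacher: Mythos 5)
Your proof is correct, and for the links (1.)$\Leftrightarrow$(2.), (3.), (4.), (7.) and (2.)$\Leftrightarrow$(8.) it is essentially the paper's argument: your identity $(1+i)(1+\sqrt2)=\bigl(1+\tfrac{\sqrt2}{2}(1+i)\bigr)^2$ is literally the paper's $(1+i)(1+r)=(1+\zeta)^2$ with $\zeta=(1+i)/\sqrt2$, and the Frobenius/splitting arguments for (7.) and (8.) are the same. Your explicit square root establishing $N=M$, hence (7.)$\Leftrightarrow$(8.) directly, appears in the paper only as a remark (via a citation to R\'edei-symbol machinery), so making it concrete is a small bonus. The genuine divergence is how (5.) and (6.) are attached to the rest: the paper proves (4.)$\Leftrightarrow$(5.) by a self-contained Jacobi-symbol computation, $\leg{a+b}{p}=\leg{p}{a+b}=\leg{2}{a+b}\leg{(a+b)^2+(a-b)^2}{a+b}=(-1)^{((a+b)^2-1)/8}$, and then gets (6.) by the same mod-$16$ congruence you use; you instead prove (3.)$\Leftrightarrow$(6.) by importing Gauss's theorem on the biquadratic character of $2$ (namely $2^{(p-1)/4}=1$ iff $8\mid b$). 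Both routes are valid, but the paper's is elementary and self-contained, whereas yours rests on a nontrivial classical theorem --- indeed, the paper's chain (3.)$\Leftrightarrow$(4.)$\Leftrightarrow$(5.)$\Leftrightarrow$(6.) effectively \emph{reproves} Gauss's criterion, so your version trades a two-line Jacobi computation for an external input of comparable depth. One small gloss to tighten: your ``hence also $\leg{b}{p}=1$'' does not follow formally from $\leg{a}{p}=1$; it needs either the analogous Jacobi-reciprocity computation applied to the odd part of $b$, or the observation $\leg{i}{p}=(-1)^{(p-1)/4}=1$. The paper avoids the issue by taking $i=b/a$ rather than $a/b$, so that $a+b=a(1+i)$ and only $\leg{a}{p}=1$ is needed.
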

\begin{proof}
(1.)$\Leftrightarrow$(2.): this is also shown in \cite[Lemma~2]{Razar}. If $i,r\in\mathbb{F}_p$ satisfy $i^2=-1$ and $r^2=2$, then one has $(1+i)(1+r)=(1+\zeta)^2$ for $\zeta\in\mathbb{F}_p^\times$ of order $8$. The result follows.\\
(1.)$\Leftrightarrow$(3.): with $\zeta\in\mathbb{F}_p$ as
above, note that $i:=\zeta^2$ satisfies $i^2=-1$ and
$r:=\zeta\cdot(1-i)$ has the property $r^2=2$. Therefore
\[
(-1)^{(p-1)/8}\cdot\leg{r}{p}=\leg{\zeta}{p}\cdot\leg{r}{p}=
\leg{\zeta^2\cdot (1-i)}{p}=\leg{1+i}{p}.
\]
(1.)$\Leftrightarrow$(4.): this is also shown in \cite[\S5.4.1]{Hemenway}, for completeness we recall the proof.
In $p=a^2+b^2$ we can and will assume $a$ is odd. Using the
Jacobi symbol and in particular reciprocity for it, one finds the property
$\leg{a}{p}=\leg{p}{a}=\leg{a^2+b^2}{a}=1$, hence with
$i:=(b\bmod p)/(a\bmod p)\in\mathbb{F}_p$ a primitive $4$th root of unity $\leg{a+b}{p}=\leg{a}{p}\cdot\leg{1+i}{p}=\leg{1+i}{p}$.\\
(4.)$\Leftrightarrow$(5.): this also follows using
\cite[\S5.4.1]{Hemenway}, we repeat the calculation.
\[
\leg{a\!+\!b}{p}=\leg{p}{a\!+\!b}=\leg{2}{a\!+\!b}\leg{(a\!+\!b)^2+(a\!-\!b)^2}{a+b}=(-1)^{\left((a+b)^2-1)\right)/8},
\]
from which the result is immediate.\\
(5.)$\Leftrightarrow$(6.): as before, write
$p=a^2+b^2$ with $a$ odd. The assumption $p\equiv 1\bmod 8$ implies $4|b$. With $b=2c=4d$ one obtains
congruences
$(a+b)^2\equiv a^2+8ad\equiv a^2+8d\equiv p+4c\bmod 16$ and the result follows.\\
(1.)$\Leftrightarrow$(7.): using the notations preceding the statement of Prop.~\ref{equiv1mod8}, one has
$\mathbb{Q}\subset\mathbb{Q}(i)\subset
\mathbb{Q}(\zeta)\subset
\mathbb{Q}(\zeta,\alpha)=N$ where $\zeta$ is
a primitive $8$th root of unity, and each consecutive
extension has degree $2$. Note that $\mathbb{Q}(\zeta)$ is the field of invariants of
the generator $\rho^2$ of the center of $\gal(N/\mathbb{Q})$.
The condition $p\equiv 1\bmod 8$ is equivalent to
$p$ splitting completely in $\mathbb{Q}(\zeta)$.
Hence $p$-adically one obtains
$\mathbb{Q}_p(\zeta)=\mathbb{Q}_p$ with the extension obtained by adjoining a square root of
$1+i\in\mathbb{Q}_p$. This extension is nontrivial
precisely when a Frobenius at $p$ acts
nontrivially on $N$, hence as $\rho^2$. Since
$1+i$ is a square in $\mathbb{Q}_p$ iff $\leg{1+i}{p}=1$, this proves the result.\\
(2.)$\Leftrightarrow$(8.): the minimal polynomial of $\gamma:=\sqrt{1+\sqrt{2}}$ over $\mathbb{Q}$ equals
$x^4-2x^2-1$ with zeroes $\pm\gamma$ and $\pm\delta$,
where $(\gamma\delta)^2=-1$. The splitting field
$M$ fits in a tower
$\mathbb{Q}\subset\mathbb{Q}(i)\subset
\mathbb{Q}(\zeta)\subset
\mathbb{Q}(\zeta,\gamma)=M$ where $\zeta$ is
a primitive $8$th root of unity and each consecutive
extension has degree $2$. The remainder of the
argument is analogous to the proof of (1.)$\Leftrightarrow$(7.), with now $1+\sqrt{2}$
taking the role of
$1+i$.
\end{proof}
\begin{remark}
Note that the fields $N$ and $M$ appearing here
are isomorphic; this provides a direct proof of 
(7.)$\Leftrightarrow$(8.). This elementary observation is in fact the special case
$(Q,a,b,x,y,z)=(\mathbb{Q},-1,2,1,1,1)$ of
\cite[Cor.~5.2]{stevenhagen2018redei}.

Several of the conditions appearing in Prop.~\ref{equiv1mod8} play a role in
various papers on $8$-rank of class groups. For example,
\cite[Thm.~2]{Kaplan} states for primes $p\equiv 1\bmod 8$ that
the class number of $\mathbb{Q}(\sqrt{-p})$ is not divisible by $8$
precisely when condition (5.) holds. This equivalence was already
observed by Gauss in a letter to Dirichlet dated
30~May 1828. Similarly, reformulating
\cite[Thm.~1]{Brown} yields that for primes $p\equiv 1\bmod 8$ 
the class number of $\mathbb{Q}(\sqrt{-p})$ is not divisible by $8$
precisely when condition (6.) holds. Included in
\cite[Main Theorem]{BarrucandCohn} as well as in part of \cite[Thm~1]{Stevenhagen93}, the same class number
criterion is shown to be equivalent to (1.). A reason for these relations with
class numbers can be found as a special
case of \cite[Thm.~1.1]{Wang}. Still other equivalent criteria as
well as many of the ones above are listed in \cite[Lemma-Def.~1]{Li-Tian} and \cite[Thm~4.2]{Wang}. As an
additional, related and very accessible text in the same spirit
we refer to \cite{BruinHem}.

Alternatively, the conditions (1.)--(8.) can  be expressed in terms of a
R\'edei symbol (see, e.g., \cite[Section~6]{stevenhagen2018redei}).
They are equivalent to the statement
$[-1,2,p]=-1$.
\end{remark}

We now provide some details regarding the $2$-descent calculations
for the elliptic curves involved. For any prime number $p$, write
\[
\Ep\colon y^2=x^3-p^2x.
\]
The curve $\Ep/\mathbb{Q}$ has good reduction away from $\{2,p\}$.
With notations as in, e.g., \cite[Ch.~X \S1]{Silverman} take
$S=\{2,p,\infty\}$ and denote $\mathbb{Q}(S)=\mathbb{Q}(S,2):=\{x\in\mathbb{Q}^*/{\mathbb{Q}^*}^2\;:\;
v(x)\equiv 0\bmod 2\;\text{for all}\;v\not\in S\}$
which is an elementary $2$-group generated by
the classes of $-1, 2$, and $p$. With
\[
H:= \left\{ (c_1, c_2, c_3)\in \mathbb{Q}(S)\times \mathbb{Q}(S)\times\mathbb{Q}(S)\;:\; c_1c_2c_3=1
\right\},
\]
one has the Kummer homomorphism
$\delta\colon \Ep(\mathbb{Q})\to H$ with kernel
$2\Ep(\mathbb{Q})$ given by 
\[\delta(a,b)=\left\{\begin{array}{ll}
(a+p,a,a-p) & \text{for } b\neq 0;\\
(2,-p,-2p) & \text{for } a=-p;\\
(p,-1,-p) & \text{for } a=0;\\
(2p,p,2) & \text{for } a=p.
\end{array}\right.
\]
One has local versions
$\delta_v\colon \Ep(\mathbb{Q}_v)\to H_v$ of this,
obtained by replacing $\mathbb{Q}$ by
the completion $\mathbb{Q}_v$. The inclusion $\mathbb{Q}\subset\mathbb{Q}_v$ induces maps
$\Ep(\mathbb{Q})/2\Ep(\mathbb{Q})\to
\Ep(\mathbb{Q}_v)/2\Ep(\mathbb{Q}_v)$
as well as $\mathbb{Q}(S)\to\mathbb{Q}^*_v/{\mathbb{Q}_v^*}^2$ and
$H\to H_v$; they will all be denoted as $\iota_v$. The $2$-Selmer group
$S^2(\Ep/\mathbb{Q})\subset H$ is given by
\[
S^2(\Ep/\mathbb{Q}):=\left\{c=(c_1,c_2,c_3)\in H\;:\; 
\iota_v(c)\in \delta_v(\Ep(\mathbb{Q}_v))\,\forall\, v\in S
\right\}.
\]
It fits in a short exact sequence
\[
0\to \Ep(\mathbb{Q})/2\Ep(\mathbb{Q})
\stackrel{\delta}{\longrightarrow} S^2(\Ep/\mathbb{Q})
\longrightarrow \Sh(\Ep/\mathbb{Q})[2]\to 0
\]
where $\Sh(\Ep/\mathbb{Q})[2]$ is the $2$-torsion in the
Shafarevich-Tate group of $\Ep/\mathbb{Q}$. To ease notation,
for fields $F$ an element $c{F^*}^2\in F^*/{F^*}^2$ will simply
be denoted $c$. A short calculation shows:
\begin{itemize}
    \item $\delta_\infty(\Ep(\mathbb{R}))$ is generated by $(1,-1,-1)$;
    \item $\delta_p(\Ep(\mathbb{Q}_p))$ is generated by
    $(2,-p,-2p),\;(p,-1,-p)$ for $p\neq 2$;
    \item $\delta_2(\Ep(\mathbb{Q}_2))$ has generators
    $(2,-p,-2p),\;(p,-1,-p),\;(5,1,5)$ for $p\neq 2$. Here,
    apart from $2$-torsion one may use the point
    $(\frac{1}{4},\frac18\sqrt{1-16p^2})\in\Ep(\mathbb{Q}_2)$.
\end{itemize}
An immediate consequence is the following.
\begin{proposition}\label{SelQ}
$\dim_{\mathbb{F}_2}\,S^2(\Ep/\mathbb{Q})=
\left\{
\begin{array}{ll}
4 & \text{if }\;p\equiv 1\bmod 8;\\
2 & \text{if }\;p\equiv 3\bmod 8;\\
3 & \text{if }\;p\equiv 5,7\bmod 8.
\end{array}
\right.$
\end{proposition}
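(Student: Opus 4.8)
The plan is to compute $\dim_{\mathbb{F}_2}S^2(\Ep/\mathbb{Q})$ by pure $\mathbb{F}_2$-linear algebra, using the explicit local images just recorded. First I would fix the ambient dimensions: since $\mathbb{Q}(S)$ is generated by the classes of $-1,2,p$ it is $3$-dimensional over $\mathbb{F}_2$, and $H$ is the kernel of the surjective product map $\mathbb{Q}(S)^3\to\mathbb{Q}(S)$, so $\dim_{\mathbb{F}_2}H=6$. Concretely I parametrize $H$ by the free pair $(c_1,c_2)$ with $c_3$ forced, giving coordinates $(x_1,y_1,z_1,x_2,y_2,z_2)\in\mathbb{F}_2^6$ via $c_j=(-1)^{x_j}2^{y_j}p^{z_j}$. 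The Selmer group is then the common kernel of the maps $\phi_v\colon H\to H_v/\delta_v(\Ep(\mathbb{Q}_v))$ induced by the $\iota_v$ for $v\in S=\{2,p,\infty\}$, so $\dim S^2=6-(\text{number of independent linear conditions on the }x_j,y_j,z_j)$, and it suffices to turn each local membership condition into explicit linear equations and count.

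The key input, and the reason the answer depends only on $p\bmod 8$, is the behaviour of $-1,2,p$ under each localization map, which I would spell out place by place. At $v=\infty$ the map $\iota_\infty$ records signs, and since $2,p>0$ the condition $\iota_\infty(c)\in\langle(1,-1,-1)\rangle$ is simply $c_1>0$, i.e.\ $x_1=0$, independent of $p$. At $v=p$ one has that $\iota_p(p)$ is the uniformizer class, while $\iota_p(-1)$ and $\iota_p(2)$ are trivial or not according as $-1$ and $2$ are squares modulo $p$, governed by $p\bmod 4$ and $p\bmod 8$; in particular for $p\equiv 1\bmod 8$ both are squares and one checks $\iota_p(H)=\delta_p(\Ep(\mathbb{Q}_p))$, so the condition at $p$ is then vacuous. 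At $v=2$ the decisive observation is that $p\equiv 1\bmod 8$ is equivalent to $p$ being a square in $\mathbb{Q}_2^*$, and more generally the class of $p$ in $\mathbb{Q}_2^*/{\mathbb{Q}_2^*}^2=\langle -1,2,5\rangle$ equals $1,-5,5,-1$ for $p\equiv 1,3,5,7\bmod 8$ respectively; substituting this together with the three listed generators of $\delta_2(\Ep(\mathbb{Q}_2))$ yields three linear conditions on the coordinates.

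With these equations in hand I would solve the system in each of the four residue classes. The mild subtlety, and the main thing to get right, is that the conditions coming from different places are \emph{not} independent, so the real content is bookkeeping these coincidences rather than any single hard step. For $p\equiv 1\bmod 8$ the condition at $p$ disappears and the $\infty$-condition $x_1=0$ is already forced by the conditions at $2$, leaving exactly two independent equations and hence $\dim S^2=4$. For $p\equiv 5,7\bmod 8$ some of the equations coincide so that three remain independent, giving $\dim S^2=3$ in both cases despite the differing local data. For $p\equiv 3\bmod 8$ all four surviving equations turn out to be independent, giving $\dim S^2=2$. Once these dependencies are resolved, the stated dimensions $4,2,3,3$ drop out, so the only care needed is in correctly computing the three localization maps and then tracking the redundancies among the resulting conditions.
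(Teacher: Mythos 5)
Your proposal is correct and is essentially the paper's argument in different clothing: both reduce the computation to the explicit local images at $2$, $p$, $\infty$ together with the dependence of the classes of $-1,2,p$ in $\mathbb{Q}_v^*/\mathbb{Q}_v^{*2}$ on $p\bmod 8$, and then perform a finite $\mathbb{F}_2$-linear computation in the $6$-dimensional group $H$ (the paper solves the resulting system by direct enumeration of the possible first coordinates $\{1,2,p,2p\}$, you by counting independent linear conditions). Like the paper's own proof, yours is a sketch in that the asserted ranks of the condition matrices in each residue class are stated rather than written out, but the claimed redundancies (e.g.\ the condition at $\infty$ being implied by the one at $2$, and the condition at $p$ being vacuous when $p\equiv 1\bmod 8$) are all correct.
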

\begin{proof}
We only sketch the case $p\equiv 1\bmod 8$, the other cases are
analogous. Since $p\equiv 1\bmod 8$, one has that $-1, 2$ are squares in
$\mathbb{Q}_p$ hence the $\delta_p$-image is generated by
$(1,p,p)$ and $(p,1,p)$. Similarly, the $\delta_2$-image has
generators $(2,-1,-2),\;(1,-1,-1),\;(5,1,5)$. Considering the
$\delta_\infty$-image one observes that the first coordinate
of any $c\in S^2(\Ep/\mathbb{Q})$ is one of $\{1,2,p,2p\}$.
Now consider the possibilities:\\
if $c=(1,a,a)$, then the local conditions at $2,p$ imply $a\in\{\pm 1, \pm p\}$, giving $4$ elements. For $c=(2,a,2a)$ again one finds
$a\in\{\pm 1, \pm p\}$, and the same conclusion holds for
 $c=(p,a,ap)$ and $c=(2p,a,2ap)$.
\end{proof}
Using Monsky's result (predicted by Stephens) asserting that
primes $p\equiv 5,7\bmod 8$ are congruent, the following is a consequence.
\begin{corollary}
For any odd prime $p\not\equiv 1\bmod 8$ one has
\[\Sh(\Ep/\mathbb{Q})[2]=(0)\;\; \textit{and}\;\;
\textit{rank}\,\Ep(\mathbb{Q})=\left\{
\begin{array}{ll}
0 & \textit{if }\; p\equiv 3\bmod 8,\\
1 & \textit{if }\; p\equiv 5,7\bmod 8.
\end{array}\right.
\]
\end{corollary}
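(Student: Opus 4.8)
The plan is to extract both the rank and $\Sh(\Ep/\mathbb{Q})[2]$ directly from the Selmer dimension of Proposition~\ref{SelQ}, via the exact sequence
\[
0\to \Ep(\mathbb{Q})/2\Ep(\mathbb{Q})
\stackrel{\delta}{\longrightarrow} S^2(\Ep/\mathbb{Q})
\longrightarrow \Sh(\Ep/\mathbb{Q})[2]\to 0
\]
displayed above. Writing $r=\rank\,\Ep(\mathbb{Q})$ and $s=\dim_{\mathbb{F}_2}\Sh(\Ep/\mathbb{Q})[2]$ and taking $\mathbb{F}_2$-dimensions, this sequence gives $\dim_{\mathbb{F}_2}S^2(\Ep/\mathbb{Q})=\dim_{\mathbb{F}_2}\Ep(\mathbb{Q})/2\Ep(\mathbb{Q})+s$.

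First I would compute the middle term. Because $x^3-p^2x=x(x-p)(x+p)$ splits over $\mathbb{Q}$, the full $2$-torsion $\Ep(\mathbb{Q})[2]=\{O,(0,0),(p,0),(-p,0)\}\cong(\mathbb{Z}/2\mathbb{Z})^2$ is rational. For any finitely generated abelian group $A$ of rank $r$ one has $\dim_{\mathbb{F}_2}A/2A=r+\dim_{\mathbb{F}_2}A[2]$, so here $\dim_{\mathbb{F}_2}\Ep(\mathbb{Q})/2\Ep(\mathbb{Q})=r+2$. Notably this uses only the evident rational $2$-torsion and sidesteps any appeal to Mazur's theorem or to the classical determination of $\Ep(\mathbb{Q})_{\mathrm{tors}}$. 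Substituting and invoking Proposition~\ref{SelQ} leaves
\[
r+s=\dim_{\mathbb{F}_2}S^2(\Ep/\mathbb{Q})-2=
\begin{cases}0 & \text{if }p\equiv 3\bmod 8,\\ 1 & \text{if }p\equiv 5,7\bmod 8.\end{cases}
\]

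For $p\equiv 3\bmod 8$ the right-hand side is $0$; since $r\geq 0$ and $s\geq 0$, both must vanish, giving $\rank\,\Ep(\mathbb{Q})=0$ and $\Sh(\Ep/\mathbb{Q})[2]=(0)$ with no further input. For $p\equiv 5,7\bmod 8$ we get $r+s=1$, leaving the two possibilities $(r,s)=(1,0)$ and $(r,s)=(0,1)$. To decide between them I would invoke Monsky's theorem, quoted in the text, that every prime $p\equiv 5,7\bmod 8$ is congruent: by the equivalence $(1)\Leftrightarrow(3)$ of the introduction this means $\Ep(\mathbb{Q})$ possesses a point of infinite order, so $r\geq 1$. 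Together with $r+s=1$ this forces $(r,s)=(1,0)$, i.e. $\rank\,\Ep(\mathbb{Q})=1$ and $\Sh(\Ep/\mathbb{Q})[2]=(0)$.

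The argument is essentially formal once Proposition~\ref{SelQ} is available, and I do not expect a serious obstacle. The only genuinely external ingredient is Monsky's congruence result, and its role is sharply localized: it is precisely what excludes the alternative $(r,s)=(0,1)$ in the $p\equiv 5,7$ case, whereas the $p\equiv 3$ case is settled by the Selmer dimension alone. The one small point to state carefully is the dimension identity $\dim_{\mathbb{F}_2}A/2A=r+\dim_{\mathbb{F}_2}A[2]$, which follows at once from the structure theorem for finitely generated abelian groups.
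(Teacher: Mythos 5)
Your proposal is correct and follows essentially the same route as the paper: take $\mathbb{F}_2$-dimensions in the exact sequence relating $\Ep(\mathbb{Q})/2\Ep(\mathbb{Q})$, the $2$-Selmer group, and $\Sh[2]$, use the fully rational $2$-torsion to get $\dim_{\mathbb{F}_2}\Ep(\mathbb{Q})/2\Ep(\mathbb{Q})=r+2$, conclude immediately for $p\equiv 3\bmod 8$, and invoke Monsky's theorem to force $r=1$ (hence $s=0$) for $p\equiv 5,7\bmod 8$. The only difference is that you spell out the torsion bookkeeping that the paper leaves implicit.
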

\begin{proof}
Since $\text{rank}\,\Ep(\mathbb{Q})=\dim_{\mathbb{F}_2}
\Ep(\mathbb{Q})/2\Ep(\mathbb{Q})-2$, the exact sequence for
$S^2$ implies 
\[\text{rank}\,\Ep(\mathbb{Q})+\dim_{\mathbb{F}_2}\Sh(\Ep/\mathbb{Q})[2]=\dim_{\mathbb{F}_2}\,S^2(\Ep/\mathbb{Q})-2.
\]
Together with Proposition~\ref{SelQ} this implies the corollary for
the primes
$p\equiv 3\bmod 8$. Since Monsky's result shows that $\Ep/\mathbb{Q}$
has positive rank for $p\equiv 5,7\bmod 8$, the corollary also
follows in those cases.
\end{proof}

In the remainder of this section let $K=\mathbb{Q}(\sqrt{p})$.
Over $K$ the curve $\Ep$ is isomorphic to
\[
E\colon y^2=x^3-x.
\]
Moreover $\text{rank}\, E(K)=\text{rank}\, E(\mathbb{Q})+\text{rank}\, \Ep(\mathbb{Q})$, corresponding to the decomposition of
$E(K)\otimes\mathbb{Q}$ into eigenspaces for $\gal(K/\mathbb{Q})$.
It is well known that $\text{rank}\, E(\mathbb{Q})=0$ (equivalently, $1$ is not a congruent number, a fact already stated by Fibonacci in
1225 and a consequence of a result of Fermat; a detailed proof is
also presented in \cite{Barlow}). Hence
\[ \text{rank}\,\Ep(\mathbb{Q}) = \text{rank}\, E(K).\] 
In some cases $p\equiv 1\bmod 8$, the bound $\text{rank}\,E(K)\leq \dim_{\mathbb{F}_2}S^2(E/K)-2$
involving the $2$-Selmer group of $E$ over $K=\mathbb{Q}(\sqrt{p})$
 gives a stronger
bound for $\text{rank}\,\Ep(\mathbb{Q})$ than the one using
$S^2(\Ep/\mathbb{Q})$. In such a case it follows that
$\Sh(\Ep/\mathbb{Q})[2]$ is nontrivial.
\begin{proposition}\label{SelK}
Let $p\equiv 1\bmod 8$ be prime, and let $j\in\mathbb{Z}$ satisfy
$j^2\equiv -1\bmod p$. Then 
$\dim_{\mathbb{F}_2} S^2(E/\mathbb{Q}(\sqrt{p}))=\left\{
\begin{array}{ll}4 & \textit{if}\;\leg{1+j}{p}=1,\\
2 & \textit{if}\;\leg{1+j}{p}=-1.\end{array}
\right.$
\end{proposition}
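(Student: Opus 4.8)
The plan is to run a two-descent for $E$ directly over $K=\mathbb{Q}(\sqrt p)$, in the same style as the computation of $S^2(\Ep/\mathbb{Q})$ recorded above, and to locate the single place where the value of $\leg{1+j}{p}$ enters. First I would fix the relevant set of places. The curve $E\colon y^2=x(x-1)(x+1)$ has full rational two-torsion and good reduction away from $2$; since $p\equiv 1\bmod 8$ the prime $2$ splits in $K$ as $\mathfrak{p}_1\mathfrak{p}_2$ with $K_{\mathfrak{p}_i}\cong\mathbb{Q}_2$, whereas $p$ ramifies but still gives good reduction. Hence the relevant set is $S=\{\mathfrak{p}_1,\mathfrak{p}_2,\infty_1,\infty_2\}$. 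Because $p\equiv 1\bmod 4$, genus theory shows that the class number of $K$ is odd, so $\mathrm{Cl}(K)[2]=0$ and therefore $\dim_{\mathbb{F}_2}K(S,2)=|S|=4$; a convenient basis is $-1,\varepsilon,\pi_1,\pi_2$, where $\varepsilon$ is the fundamental unit (of norm $-1$) and $\pi_i$ generates an odd power of $\mathfrak{p}_i$. The point I would exploit is that $p=(\sqrt p)^2$ is now a square in $K$, so the generator that was essential in the descent over $\mathbb{Q}$ has disappeared.

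Next I would assemble the ambient group and the local images. The Kummer map lands in $H_K=\{(c_1,c_2,c_3)\in(K^*/K^{*2})^3:c_1c_2c_3=1\}$, and the Selmer classes lie in $H_K\cap K(S,2)^3$, of dimension $2\cdot 4=8$; the two-torsion images $(-1,-1,1)$ and $(1,2,2)$ lie in $S^2(E/K)$ for free. At each real place $E(\mathbb{R})/2E(\mathbb{R})$ is one-dimensional, with image generated by $(-1,-1,1)$, while at each dyadic place $\delta_{\mathfrak{p}_i}(E(\mathbb{Q}_2))$ is three-dimensional inside the six-dimensional local group $H_{K_{\mathfrak{p}_i}}$; here I would reuse the explicit $\mathbb{Q}_2$-generators from the list preceding Proposition~\ref{SelQ}, adjoining the non-torsion class of a point such as $\bigl(\tfrac14,\tfrac18\sqrt{-15}\bigr)\in E(\mathbb{Q}_2)$. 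The two-Selmer group is then the kernel of the localization map $H_K\cap K(S,2)^3\to\bigoplus_{v\in S}H_{K_v}/\delta_v(E(K_v))$, whose target has dimension $2\cdot 1+2\cdot 3=8$; thus $\dim_{\mathbb{F}_2}S^2(E/K)=8-\mathrm{rank}$. The two torsion classes are always in the kernel, so the corank is at least $2$, and the assertion to prove is that the rank equals $4$ when $\leg{1+j}{p}=1$ and $6$ when $\leg{1+j}{p}=-1$.

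The heart of the argument, and the step I expect to be the main obstacle, is this rank computation: deciding which combinations of the non-torsion generators $\varepsilon,\pi_1,\pi_2$ survive the local conditions at $\mathfrak{p}_1$ and $\mathfrak{p}_2$. Both completions equal $\mathbb{Q}_2$, but one global generator restricts to the two dyadic places through the two embeddings $K\hookrightarrow\mathbb{Q}_2$, that is via $\sqrt p\mapsto\pm s$ with $s^2=p$ in $\mathbb{Z}_2^\times$; membership in $\delta_{\mathfrak{p}_i}(E(\mathbb{Q}_2))$ then reduces to explicit congruences modulo $8$ (equivalently, to Hilbert symbols over $\mathbb{Q}_2$) for $\varepsilon$, $s$ and the $\pi_i$. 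The real-place conditions are easy to dispose of, since the norm $-1$ unit $\varepsilon$ has opposite signs under the two embeddings, which pins down its archimedean contribution. What genuinely remains is to translate the surviving two-adic congruence on the fundamental unit and on $\sqrt p\in\mathbb{Q}_2$ into the mod-$p$ quantity $\leg{1+j}{p}$. This is a reciprocity phenomenon: one ties the condition at the dyadic places to one at the ramified prime $p$ through the product formula for Hilbert symbols and then recognizes the resulting symbol via the chain of equivalences of Proposition~\ref{equiv1mod8}. I expect this translation, rather than any of the bookkeeping, to be the crux; once it is in place the two cases give rank $6$ and rank $4$, hence $\dim_{\mathbb{F}_2}S^2(E/K)=2$ and $4$ respectively.
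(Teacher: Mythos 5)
Your setup matches the paper's almost exactly: same set $S$ consisting of the two dyadic places and the two real places, same observation that the odd class number of $K$ forces $\dim_{\mathbb{F}_2}K(S,2)=4$ with basis $\{-1,\varepsilon,\pi_1,\pi_2\}$, same local images, and the same dimension count reducing everything to the rank of the localization map. The bookkeeping you describe is correct. But the proof stops exactly at the point you yourself flag as the crux, and the tool you propose for that step is not strong enough. The issue is to determine (i) the class of $\varepsilon$ in $K_{\mathfrak{p}_i}^*/K_{\mathfrak{p}_i}^{*2}\cong\mathbb{Q}_2^*/\mathbb{Q}_2^{*2}$ and (ii) the signs of $\pi_1,\pi_2$ at the two real places, and to show that both are governed by $\leg{1+j}{p}$. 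Neither is an ``explicit congruence modulo $8$'': the fundamental unit of $\mathbb{Q}(\sqrt p)$ admits no elementary description in terms of $p$, and whether a generator of $\mathfrak{p}_2^k$ can be chosen totally positive is a genuine class-field-theoretic question. The product formula for quadratic Hilbert symbols (ordinary quadratic reciprocity) cannot decide these; they are second-order phenomena, equivalent to the splitting of $p$ in the degree-$8$ dihedral governing field $\mathbb{Q}(\sqrt2,\sqrt{1+i})$. The paper handles this by invoking R\'edei symbols and R\'edei reciprocity: $\im_{\mathfrak{p}_2}(\varepsilon)=1\Leftrightarrow[p,-1,2]=1$, the total positivity of $x_2$ equals $[p,2,-1]$, and $[p,-1,2]=[p,2,-1]=[2,-1,p]=\leg{1+j}{p}$. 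Without this (or an equivalent governing-field argument) your two cases cannot be separated, so the proof is incomplete at its decisive step.

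A secondary inaccuracy: you dismiss the real places as ``easy to dispose of'' because $\varepsilon$ has opposite signs under the two embeddings. That part is fine for $\varepsilon$, but one of the two places where $\leg{1+j}{p}$ actually enters the computation is precisely the archimedean behaviour of $\pi_1,\pi_2$ (totally positive when $\leg{1+j}{p}=1$, totally negative when $\leg{1+j}{p}=-1$), so the real conditions are not uniform across the two cases and cannot be discarded. Finally, even granting the correct local tables, one still has to exhibit explicit Selmer elements (such as $(y_2,1,y_2)$ and $(1,\varepsilon,\varepsilon)$ in the first case) and rule out all nontrivial elements in the second; asserting that ``the two cases give rank $6$ and rank $4$'' is the conclusion, not an argument for it.
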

The result already announced in Bastien's 1915 paper is by the method sketched
above a consequence of this:
\begin{corollary}\label{Densityresult}
Let $p\equiv 1\bmod 8$ be a prime satisfying the
equivalent conditions given in Proposition~\ref{equiv1mod8}.
Then $\textit{rank}\,\Ep(\mathbb{Q})=0$
and $\Sh(\Ep/\mathbb{Q})[2]\cong 
\left(\mathbb{Z}/2\mathbb{Z}\right)^2$. In particular, $p$ is not congruent.

The set of primes discussed here has natural density $1/8$.
\end{corollary}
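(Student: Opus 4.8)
The plan is to combine the two Selmer-group computations (Propositions~\ref{SelQ} and \ref{SelK}) with the standard $2$-descent exact sequences, exploiting that both $E$ and $\Ep$ carry full rational $2$-torsion. First I would read off the rank. Since $p$ satisfies the conditions of Proposition~\ref{equiv1mod8}, condition (1.) gives $\leg{1+j}{p}=-1$, so Proposition~\ref{SelK} yields $\dim_{\mathbb{F}_2} S^2(E/K)=2$. The curve $E\colon y^2=x^3-x$ has full rational $2$-torsion, whence $E(K)[2]\cong(\mathbb{Z}/2\mathbb{Z})^2$, and the exact sequence
\[
0\to E(K)/2E(K)\to S^2(E/K)\to \Sh(E/K)[2]\to 0
\]
gives $\dim_{\mathbb{F}_2}S^2(E/K)=\text{rank}\,E(K)+2+\dim_{\mathbb{F}_2}\Sh(E/K)[2]$. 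With the left side equal to $2$, the two non-negative summands $\text{rank}\,E(K)$ and $\dim_{\mathbb{F}_2}\Sh(E/K)[2]$ must both vanish. Via the identity $\text{rank}\,\Ep(\mathbb{Q})=\text{rank}\,E(K)$ recalled just before the corollary, this forces $\text{rank}\,\Ep(\mathbb{Q})=0$; by the equivalence of congruence with positive Mordell--Weil rank recorded as $(1)\Leftrightarrow(3)$ in the introduction, $p$ is therefore not congruent.

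Next I would compute $\Sh(\Ep/\mathbb{Q})[2]$ over $\mathbb{Q}$ directly. Proposition~\ref{SelQ} gives $\dim_{\mathbb{F}_2}S^2(\Ep/\mathbb{Q})=4$ for $p\equiv 1\bmod 8$. Since $\Ep$ likewise has full rational $2$-torsion, namely $(0,0)$ and $(\pm p,0)$, one has $\dim_{\mathbb{F}_2}\Ep(\mathbb{Q})/2\Ep(\mathbb{Q})=\text{rank}\,\Ep(\mathbb{Q})+2=2$, and the exact sequence over $\mathbb{Q}$ displayed in the excerpt then yields $\dim_{\mathbb{F}_2}\Sh(\Ep/\mathbb{Q})[2]=4-2=2$, that is, $\Sh(\Ep/\mathbb{Q})[2]\cong(\mathbb{Z}/2\mathbb{Z})^2$.

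Finally, for the density statement I would invoke the Galois-theoretic reformulation (7.). The equivalent conditions single out exactly those $p$ whose Frobenius in $G=\gal(\mathbb{Q}(\sqrt{2},\sqrt{1+i})/\mathbb{Q})$, dihedral of order $8$, equals the central generator $\rho^2$. Because $\rho^2$ lies in the center its conjugacy class is a singleton, so the Chebotarev density theorem gives density $1/|G|=1/8$. As an internal check, the complementary central class $\{e\}$ accounts for the remaining density-$1/8$ set of primes $p\equiv 1\bmod 8$, together matching the total density $1/4$ of such primes. The only point demanding care is the bookkeeping of the torsion contribution—the shift by $2$ arising from full $2$-torsion—in each exact sequence, together with the rank transfer $\text{rank}\,\Ep(\mathbb{Q})=\text{rank}\,E(K)$; once these are correctly accounted for, every dimension count is forced and the Chebotarev computation is immediate.
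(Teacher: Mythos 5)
Your proposal is correct and follows essentially the same route as the paper: the authors likewise deduce $\mathrm{rank}\,\Ep(\mathbb{Q})=0$ from the $2$-dimensional Selmer group over $K=\mathbb{Q}(\sqrt{p})$ (Proposition~\ref{SelK}) combined with $\mathrm{rank}\,\Ep(\mathbb{Q})=\mathrm{rank}\,E(K)$, obtain $\Sh(\Ep/\mathbb{Q})[2]\cong(\mathbb{Z}/2\mathbb{Z})^2$ from the $4$-dimensional Selmer group over $\mathbb{Q}$ (Proposition~\ref{SelQ}) via the descent exact sequence, and get the density $1/8$ from Chebotar\"ev applied to the central Frobenius condition of Proposition~\ref{equiv1mod8}(7). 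Your write-up simply makes explicit the torsion bookkeeping that the paper leaves as ``the argument sketched above.''
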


\begin{proof}
Most of this is immediate from 
Propositions~\ref{SelQ} and \ref{SelK} by
the argument sketched above.
The assertion about the density is 
obtained by applying Chebotar\"ev's 
density theorem (see, e.g. \cite{Lenstra-Stevenhagen}) in the
situation of Proposition~\ref{equiv1mod8}(7).
\end{proof}

\begin{proof} (of Proposition~\ref{SelK}).
The elliptic curve $E$ has good reduction away from $2$, 
so we take $S=\{\mathfrak{p}_2,\mathfrak{q}_2,\infty_1,\infty_2\}$, with $\mathfrak{p}_2,\mathfrak{q}_2$ the primes over $2$ in $K=\mathbb{Q}(\sqrt{p})$ and $\infty_1,\infty_2$ the two real embeddings. 
As with $E^{(p)}/\mathbb{Q}$, set $K(S)=\{x\in K^*/K^{*2}:v(x)\equiv 0\bmod 2\text{ for all }v\notin S\}$ and $H=\{(c_1,c_2,c_3):K(S)\times K(S)\times K(S):c_1c_2c_3=1\}$. The Kummer homomorphism
$\delta\colon E(K)\to H$ is then given by
\[
\delta(a,b)=\left\{\begin{array}{ll}
(a+1,a,a-1) & \text{for } b\neq 0;\\
(2,-1,-2) & \text{for } a=-1;\\
(1,-1,-1) & \text{for } a=0;\\
(2,1,2) & \text{for } a=1.
\end{array}\right.
\]
Regarding the local images $\delta_{v}(E(K_v))$, 
note that the completion $K_v$ equals $\mathbb{Q}_2$ for
$v\in\{\mathfrak{p}_2,\mathfrak{q}_2\}$ and $K_v=\mathbb{R}$
for $v\in\{\infty_1,\infty_2\}$. Observe:
\begin{itemize}
    \item $\delta_v(E(\mathbb{R}))$ is generated by $(1,-1,-1)$ for $v\in\{\infty_1,\infty_2\}$,
    \item $\delta_v(E(\mathbb{Q}_2))$ has generators
    $(2,-1,-2),\;(1,-1,-1),\;(-3,1,-3)$ for $v\in\{\mathfrak{p}_2,\mathfrak{q}_2\}$.
\end{itemize}
The group $K(S)$ fits in an exact sequence
\[
0\to R_S^*/R_S^{*2}\to K(S)\to\cl(R_S)[2]\to 0,
\]
where $R_S$ is the ring of $S$-integers of $K$. We have $\cl(R_S)[2]=0$ since $\cl(R_S)$ is a quotient of $\cl_K$ and $K$ has odd class number. 
As $R_S^{*}$ has rank $3$ and torsion subgroup
$\{\pm 1\}$ it follows that $K(S)$ has $\mathbb{F}_2$-dimension $4$. 
We now describe a convenient basis of $K(S)$. 

Given $c\in K(S)$ and $v\in S$,
write $\im_v(c)$ for the image of $c$ under 
$K(S)\to K_v^*/K_v^{*2}$. 
Note that a fundamental unit of $K$ has negative norm (the narrow Hilbert class field of $K$ has odd degree). 
Choose a fundamental unit $\varepsilon\in\mathcal{O}_K^*$ with $\infty_1(\varepsilon)>0$, 
so $\infty_2(\varepsilon)<0$. 
Since $\varepsilon\overline{\varepsilon}=-1$,
$\im_{\mathfrak{p}}(\varepsilon)=\im_{\mathfrak{q}}(\overline{\varepsilon})$ for $\mathfrak{p}$ and $\mathfrak{q}$ conjugate, and $\im_{\mathfrak{p}_2}(\varepsilon)\subset\langle -1,3\rangle\subset\mathbb{Q}_2^*/\mathbb{Q}_2^{*2}$, precisely one of $\mathfrak{p}_2,\,\mathfrak{q}_2$ is unramified in $K(\sqrt{\varepsilon})$. By interchanging the two if necessary we may and will assume that $\mathfrak{p}_2$ is unramified in $K(\sqrt{\varepsilon})$. Equivalently, this means
$\im_{\mathfrak{p}_2}(\epsilon)\in\{1,-3\}$, and of course
$\im_{\mathfrak{q}_2}(\epsilon)=-\im_{\mathfrak{p}_2}(\epsilon)$. In terms of a R\'edei symbol as in \cite{stevenhagen2018redei}, in fact
$\im_{\mathfrak{p}_2}(\epsilon)=1\Leftrightarrow[p,-1,2]=1$.
\\
Take $k$ any odd multiple of the order of
$[\mathfrak{p}_2]\in\cl_K$ and write $\mathfrak{p}_2^k=(x_2)$ for 
some $x_2\in K$. Multiplying $x_2$ by $\pm \varepsilon$ if necessary we may assume $x_2$ has positive norm and  that $\mathfrak{q}_2$ is unramified in $K(\sqrt{x_2})$, so
$x_2\cdot \overline{x_2}=2^k$ and 
$\im_{\mathfrak{q}_2}(x_2)\in\{1,-3\}$.
These choices yield $K(S)=\langle -1,\varepsilon,x_2,y_2\rangle$, with $y_2=\overline{x_2}$. Moreover $\im_{\infty_m}(x_2)
=\im_{\infty_m}(y_2)$
is independent of $m\in\{1,2\}$ and it
equals the R\'{e}dei symbol $[p,2,-1]$. 
R\'edei reciprocity \cite[Thm.~1.1]{stevenhagen2018redei} implies
in particular $[p,2,-1]=[p,-1,2]$, so 
$\im_{\infty_1}(x_2)=1\Leftrightarrow \im_{\mathfrak{p}_2}(\epsilon)=1$. Again using
R\'edei reciprocity one has, for $j\in\mathbb{Z}$
such that $j^2\equiv -1\bmod p$, the equality
$[p,2,-1]=[2,-1,p]=\leg{1+j}{p}$ (note that this
is exactly the example discussed in
\cite[Section~6]{stevenhagen2018redei}).
For $c$ any of these generators
and $v\in S$, this means that the following tables give $\im_v(c)\in K_v^*/K_v^{*2}$.

\vspace{\baselineskip}\noindent
 Case $\leg{1+j}{p}=1$ (with $j\in\mathbb{Z}$ such that $j^2\equiv -1\bmod p$):
\begin{equation*}
\begin{array}{c|c c c c}
 & \mathfrak{p}_2 & \mathfrak{q}_2 & \infty_1 & \infty_2 \\
\hline
-1 & -1 & -1 & -1 & -1 \\
\varepsilon & \cellcolor[gray]{0.8}1 & \cellcolor[gray]{0.8}-1 & 1 & -1 \\
x_2 & \cellcolor[gray]{0.65}2a & \cellcolor[gray]{0.65}a & \cellcolor[gray]{0.8}1 & \cellcolor[gray]{0.8}1 \\
y_2 & \cellcolor[gray]{0.65}a & \cellcolor[gray]{0.65}2a & \cellcolor[gray]{0.8}1 & \cellcolor[gray]{0.8}1 \\
\end{array}
\end{equation*}
Here $a=\im_{\mathfrak{q}_2}(x_2)\in\langle-3\rangle\subset\mathbb{Q}_2^*/\mathbb{Q}_2^{*2}$.

To compute $S^2(E/K)=\{c\in H:\iota_v(c)\in\delta_v(E(K_v))\;\forall\, v\in S\}$
in this situation,
write $S^2(E/K)=\delta(E(K)[2])\oplus A$ with
\[
A=\{c\in S^2(E/K)\;:\;\iota_{\mathfrak{p}_2}(c)\in\langle (-3,1,-3)\rangle\}.
\]
Let $c=(c_1,c_2,c_3)\in A$. As $c_1$ is totally positive 
it follows that $c_1\in\langle x_2,y_2\rangle$.  Considering both possibilities for $\im_{\mathfrak{p}_2}(x_2)$ one finds that
$\im_{\mathfrak{p}_2}(c_1)\subset\langle -3\rangle$ implies  $c_1\in\langle y_2\rangle$. 
The $\mathfrak{p}_2$-adic and $\mathfrak{q}_2$-adic images 
of $c_2$ result in $c_2\in\langle\varepsilon\rangle$. 
Hence $A$ consists of at most four elements. One checks that 
$(y_2,1,y_2),(1,\varepsilon,\varepsilon)\in A$ 
regardless of $a=\im_{\mathfrak{q}_2}(x_2)\in\{1,-3\}$. 
As a consequence  $\dim_{\mathbb{F}_2} A=2$ and therefore $\dim_{\mathbb{F}_2}S^2(E/K)=4$ in this situation.\\

\vspace{\baselineskip}\noindent
The other case is $\leg{1+j}{p}=-1$ (again $j^2\equiv -1\bmod p$). Here one has
\begin{equation*}
\begin{array}{c|c c c c}
 & \mathfrak{p}_2 & \mathfrak{q}_2 & \infty_1 & \infty_2 \\
\hline
-1 & -1 & -1 & -1 & -1 \\
\varepsilon & \cellcolor[gray]{0.8}-3 & \cellcolor[gray]{0.8}3 & 1 & -1 \\
x_2 & \cellcolor[gray]{0.65}2a & \cellcolor[gray]{0.65}a & \cellcolor[gray]{0.8}-1 & \cellcolor[gray]{0.8}-1 \\
y_2 & \cellcolor[gray]{0.65}a & \cellcolor[gray]{0.65}2a& \cellcolor[gray]{0.8}-1 & \cellcolor[gray]{0.8}-1 \\
\end{array}
\end{equation*}
with $a=\im_{\mathfrak{q}_2}(x_2)\in\langle-3\rangle\subset\mathbb{Q}_2^*/\mathbb{Q}_2^{*2}$.
As before, one uses the decomposition
$S^2(E/K)=\delta(E(K)[2])\oplus A$ in which one takes the subgroup
$
A=\{c\in S^2(E/K)\;:\;\iota_{\mathfrak{p}_2}(c)\in\langle (-3,1,-3)\rangle\}$.

Let $c=(c_1,c_2,c_3)\in A$. Considering the
$\mathfrak{p}_2$-adic image shows that either
$c_2=y_2$ and $a=1$ which is inconsistent with the
possibilities for $\iota_{\mathfrak{q}_2}(c)$,
or $c_2=\varepsilon y_2$ and $a=-3$ (again, inconsistent
with $\iota_{\mathfrak{q}_2}(c)$), or $c_2=1$.
Since $c_1$ is
totally positive, one has $c_1\in\{1,-x_2,-y_2,x_2y_2\}$.
The possibility $c_1=-x_2$ is excluded by considering
the $\mathfrak{p}_2$-image, and in the same way $c_1=-y_2$
is impossible because of the $\mathfrak{q}_2$-image.
Since also $c_1=x_2y_2$ is not compatible with $c\in A$,
it follows that $c_1=1$ and therefore $A=(0)$, so  $\dim_{\mathbb{F}_2}S^2(E/K)=2$, completing the proof.
\end{proof}
\section{Examples}\label{Sect3}
A remarkable result by J.B.~Tunnell \cite{Tunnell} (here restricted to odd
integers $n$) states, in a version proposed by N.D.~Elkies,
that
\[
\begin{array}{c}
n\;\textit{is congruent}\\
\Downarrow\\
2x^2+y^2+8z^2=n\; \textit{has as many solutions in}\;\mathbb{Z}^3\;
\textit{with}\; 2|z \;\textit{as with}\; 2\nmid z.
\end{array}
\]
This gives an easy method for showing that certain integers are {\em not} congruent. Applying this to the primes $p\equiv 1\bmod 8$
below some bound $b$ such that $p$ does not satisfy the conditions given in Proposition~\ref{equiv1mod8} one obtains (we used Magma \cite{Magma} for this and further calculations)  the following table.
\[
\begin{array}{rcccccc}
b: &2000&4000&6000&8000&10000&12000\\
\pi(b): &303 &550&783&1007&1229&1438\\
\#p\equiv1\bmod 8 \;\&\; \leg{1+i}{p}=1: &30&62&93&116&146&172\\
\#p \text{noncongruent:} &19&41&62&78&99&122
\end{array}
\]
As a consequence of \cite[Thm.~1.1]{Wang} the primes $p$ discussed here
(i.e., $p\equiv 1\bmod 8$ and $\leg{1+i}{p}=1$ and $p$ is noncongruent) have the property
$\Sh(\Ep/\mathbb{Q})[2^\infty]\supsetneq 
\left(\mathbb{Z}/2\mathbb{Z}\right)^2$. In particular this implies
that $\Sh(\Ep/\mathbb{Q})$ contains elements of order $4$. For
the primes contributing to the table (so $p<12000$, the set looks like $\{113,\,337,\,409, \ldots\ldots,\,11897,\,11969\}$) a simple
Magma test confirms this as well:
\begin{verbatim}
  E:=EllipticCurve([-p^2,0]);
  MordellWeilShaInformation(E : ShaInfo);
\end{verbatim}

The table suggests that at least $1/12$-th of the set of all primes 
has the property described here. If the converse of Tunnell's theorem
holds (as would be a corollary of the Birch and Swinnerton-Dyer conjecture), this would mean that at most $1/6$-th of the set
of primes $p\equiv 1\bmod 8$ is congruent.
A Magma test 
 yields the following data.
\[
\begin{array}{rccccccc}
\text{bound}\;b: & 2000 & 4000 & 6000 & 8000 & 10000 & 12000&14000\\
\# \leq b,\;\equiv 1\bmod 8: &68 & 129 & 186 & 243 & 295 & 341&400\\
\# p\;\text{congruent}: &11 & 21 & 31 & 38 & 47 & 50 &58
\end{array}
\]
The $58$ primes $\{41,\,137,\,257,\,\ldots\ldots,9377,\ldots\ldots,13513,13841,13921\}$
here all result in $\text{rank}\Ep(\mathbb{Q})=2$ and hence
$\Sh(\Ep/\mathbb{Q})[2]=(0)$. Indeed, Magma quickly finds two
independent points except in one case: for $p=9377$ a point of
infinite order with $x=-\frac{6635776}{4225}$ is easily found.
To obtain a second, independent rational point one needs to
increase the {\tt Effort} parameter in {\tt MordellWeilShaInformation}, resulting in the points with
\[
x=\frac{9377\cdot(46111436236957655053256122338300576234143337)^2}{28100967414057580762568605652421621908428616^2}.
\]
Not surprisingly, our data is consistent with the list of  congruent numbers $n<10000$ available from \cite{OEIS}.
It is conceivable that using e.g. higher descents, a density result for primes
$p\equiv 1\bmod 8$ satisfying
$(\mathbb{Z}/4\mathbb{Z})^2\subset 
\Sh(\Ep/\mathbb{Q})$ may be obtained.

In contrast, the question whether or not infinitely
many primes $p\equiv 1\bmod 8$ are congruent numbers seems much harder.
We finish this paper by showing that
a special case of Bouniakowsky's
conjecture implies the existence of infinitely many such primes.
Recall that Bouniakowsky's conjecture
from 1854 (see \cite[p.~328]{Buniakowsky}) states that if $g(x)\in\mathbb{Z}[x]$
is irreducible, has leading coefficient $>0$, and the gcd of all values
$\{g(m)\;:\;m\in\mathbb{Z}\}$ equals $1$,
then $g(m)$ is claimed to be a prime
number for infinitely many $m\in\mathbb{Z}$. Note that this is a
very early special case of Schinzel's Hypothesis~H
formulated in \cite[p.~188]{Schinzel-Sierpinski}.
\begin{proposition}\label{examples1mod8}
The polynomial $f(x):=8x^4+16x^3+12x^2+4x+1$
satisfies the conditions in Bouniakowsky's conjecture, so $f(\mathbb{Z})$ is
supposed to contain infinitely many
prime numbers.

If $n\in f(\mathbb{Z})\setminus\{1\}$, then $n\equiv 1\bmod 8$ and $n$ is a congruent number.
\end{proposition}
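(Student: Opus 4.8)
The plan is to reduce $f$ to a transparent normal form and then settle each assertion by an explicit calculation, the only substantial ingredient being one explicit rational point. First I would record the identity
$2f(x)=(2x+1)^4+1$,
so that, writing $w:=2m+1$ (an odd integer) and $n:=f(m)$, we have $2n=w^4+1$. This single rewriting handles the routine parts. For Bouniakowsky's conditions: the leading coefficient $8$ is positive; irreducibility of $f$ over $\mathbb{Q}$ follows because $2f(x)=(2x+1)^4+1$ is, after the invertible affine substitution $T=2x+1$, the cyclotomic polynomial $\Phi_8(T)=T^4+1$, which is irreducible, while $f$ is primitive since $\gcd(8,16,12,4,1)=1$; and the gcd of the value set is $1$ simply because $f(0)=1$. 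The congruence assertion is equally short: $w$ odd forces $w^4\equiv 1\bmod 16$, whence $2n=w^4+1\equiv 2\bmod 16$ and $n\equiv 1\bmod 8$. The same identity shows $f(m)=1$ exactly for $w=\pm 1$, i.e. $m\in\{0,-1\}$, so that $n\in f(\mathbb{Z})\setminus\{1\}$ corresponds precisely to $|w|\ge 3$.

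The heart of the matter is to show each such $n=(w^4+1)/2$ is a congruent number. By the equivalence (1)$\Leftrightarrow$(2) recalled in the introduction it suffices to exhibit a point $(a,b)\in E^{(n)}(\mathbb{Q})$ with $b\neq 0$; passing to the isomorphic model $y^2=n(x^3-x)$ via $(X,Y)=(nx,ny)$, this amounts to finding a rational $x\neq 0,\pm 1$ for which $n(x^3-x)$ is a rational square. I would present the explicit solution
$x=\dfrac{w^4+1}{2w^2}$,
for which $x^2-1=\dfrac{(w^4-1)^2}{4w^4}$ and hence
$n\,x(x^2-1)=\left(\dfrac{w^8-1}{4w^3}\right)^2$,
a one-line verification. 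Since $|w|\ge 3$ gives $x\neq 0,\pm 1$, the associated $y$-coordinate is nonzero, so condition (2) holds and $n$ is congruent. For concreteness the corresponding right triangle can be written down; e.g. $w=3$, $n=41$ yields legs $\tfrac{40}{3}$ and $\tfrac{123}{20}$ with hypotenuse $\tfrac{881}{60}$.

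The only real obstacle is discovery rather than verification: producing the point $x=\frac{w^4+1}{2w^2}$. I would find it by regarding $n\,x(x^2-1)=\square$ as a curve over $\mathbb{Q}(w)$ and seeking a section. The substitution $x=\frac{\tau^2+1}{2\tau}$ makes $x^2-1=\frac{(\tau^2-1)^2}{4\tau^2}$ automatically a square and reduces the task to making $(w^4+1)(\tau^2+1)\tau$ a square in $\mathbb{Q}(w)$; the choice $\tau=w^2$ does this at once, since then $\tau^2+1=w^4+1$ pairs off and the residual factor $\tau=w^2$ is itself a perfect square. Once the point is guessed, everything above is a direct computation, so no genuine difficulty remains; I would simply remark that the resulting section specializes, for each odd $w$ with $|w|\ge 3$, to the required rational point on $E^{(n)}$.
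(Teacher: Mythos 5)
Your proposal is correct and in substance identical to the paper's proof: the irreducibility argument (via the $8$-th cyclotomic polynomial evaluated at $2x+1$), the congruence $n\equiv 1\bmod 8$, and the determination of when $f(m)=1$ all coincide, and your rational point $x=(w^4+1)/(2w^2)$ on $y^2=n(x^3-x)$ is exactly the curve-side translation of the paper's explicit Pythagorean triple $\bigl(2nw^2,\;n^2-w^4,\;n^2+w^4\bigr)$, whose area is $n$ times a square (your sample triangle for $w=3$ is the paper's triangle rescaled by $120$). Both arguments rest on the same underlying identity --- that $2n=w^4+1$ forces $n\pm w^2$ to be half-squares --- so only the packaging (point on $E^{(n)}$ versus right triangle) differs.
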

\begin{proof}
It is not difficult to verify that $f(x)\in\mathbb{Z}[x]$ is irreducible (in fact, up to a a factor $2$ it equals the $8$-th
cyclotomic polynomial evaluated in $2x+1$). Clearly the leading
coefficient is positive, and since $f(0)=1$ the gcd of the
values $f(k)$ equals $1$. This shows the first part of the
proposition.

For integers $k$, note that $f(k)\equiv 4(k^2+k)+1\bmod 8= 1\bmod 8$.
Also, $2f(k)=(2k+1)^4+1$ which implies $f(k)\geq 1$ whenever $k\in\mathbb{Z}$
and $f(k)=1\Leftrightarrow k\in\{-1,0\}$ (assuming $k$ is real).
Hence to complete the proof, take $n=f(k)$ with $k\in\mathbb{Z}\setminus\{-1,0\}$, so that $n\geq f(1)=41$ (because
$f(x)=f(-x-1)$ and $x\mapsto f(x)$ increases for $x>-\frac12$). Then
\[
\begin{array}{l}
a:=2n\cdot(2k+1)^2,\\
b:=16\cdot(k^2+k)^2(2k^2+2k+1)^2=n^2-(2k+1)^4,\\
c:=n^2+(2k+1)^4
\end{array}
\]
are positive integers satisfying $a^2+b^2=c^2$, hence they arise as
lengths of a right-angled triangle with area $n$ times a square.
This implies that $n$ is a congruent number.
\end{proof}
\begin{remark}\label{DensityRemark}
Of the $58$ prime numbers $p\equiv 1\bmod 8$ with $p<14000$ that are
congruent, $4$ are in $f(\mathbb{Z})$. Note that $f(\mathbb{Z})$
contains only $5$ integers in the interval $[2, 14000]$.
A special case of a conjecture of P.T.~Bateman and R.A.~Horn predicts the number of
integers $1\leq k\leq b$ such that $f(k)$ is prime: it should grow
$\sim \frac{C(f)}{4}b/\ln(b)$ with $C(f)=\prod_p\frac{p-\omega(p)}{p-1}$ and $\omega(p)=4$ for
$p\equiv 1\bmod 8$ while $\omega(p)=0$ otherwise. An approximation $C(f)\approx 5.358$ one obtains using \cite{Shanks}.

Since $f(\frac{u}{v})=\left((2\frac{u}{v}+1)^4+1\right)/2=
\frac1{v^4}\left((2u+v)^4+v^4\right)/2$, the proof of
Proposition~\ref{examples1mod8} shows that integers represented
by the binary form 
\[\left((2u+v)^4+v^4\right)/2 = 8u^4+16u^3v+12u^2v^2+4uv^3+v^4
\]
(except for $u=0$ and for $u=-v$) are congruent numbers.
Such numbers are $\equiv m^4\bmod 8$, so $\equiv 1\bmod 8$ whenever
$m$ is odd.
Below the bound $14000$ this yields $9$ prime congruent numbers,
namely 
\[41,\,
313,\,
353,\,
1201,\,
3593,\,
4481,\,
7321,\,
8521,\,
10601.
\]
Applying \cite[Thm.~1]{Stewart-Top} to the binary form given here
yields a constant $C>0$ such that up to any bound $B$ it attains at least $C\sqrt{B}$ square-free values; by construction these
are congruent numbers $\equiv 1\bmod 8$ since for $v$ even,
the number would be divisible by $8$ hence it would not be
square-free. It seems that this is slightly stronger than what
would be obtained starting from $(2uv)^2+(u^2-v^2)^2=(u^2+v^2)^2$,
i.e., from the binary form $uv(u+v)(u-v)$.
\end{remark}
\printbibliography

\end{document}